\documentclass[a4paper,12pt]{amsart}
\usepackage{a4wide}
\usepackage{amsmath,amsfonts,amssymb,amsthm}
\usepackage{graphics}
\usepackage{epsfig}
\usepackage{esint}
\setlength{\parindent}{0cm}
\usepackage{mathrsfs}

\usepackage[colorlinks,citecolor=blue]{hyperref}

\newcommand{\ignore}[1]{}

\numberwithin{equation}{section}

\newtheorem{theorem}{Theorem}

\newtheorem{lemma}{Lemma}

\newtheorem{assumption}{Assumption}



\newcommand{\R}{\mathbb{R}}

\newcommand{\dd}{\mathrm d}
\def\les{\lesssim}
\def\ges{\gtrsim}
\def\lt{\left}
\def\rt{\right}
\def \dd{\mathrm d}
\def \e{\varepsilon}
\renewcommand{\tilde}{\widetilde}
\usepackage{color}
\definecolor{darkred}{rgb}{0.9,0.1,0.1}
\definecolor{darkblue}{rgb}{0,0,0.7}
\definecolor{darkgreen}{rgb}{0,0.5,0}

\begin{document}
\title[Partial regularity for OT maps with $p-$cost]{Partial regularity for optimal transport with \texorpdfstring{$p$}{}-cost away from fixed points}

\author{Michael Goldman}
\address{CMAP, Polytechnique, Route de Saclay, 91128 Palaiseau Cedex, France}
\email{michael.goldman@cnrs.fr}

\author{Lukas Koch}
\address{Max Planck Institute for Mathematics in the Sciences, Inselstrasse 22, 04229 Leipzig, Germany}
\email{lkoch@mis.mpg.de}

\begin{abstract}
We consider maps $T$ solving the optimal transport problem with a cost $c(x-y)$ modeled on the $p$-cost. For H\"older continuous marginals, we prove a $C^{1,\alpha}$-partial regularity result for $T$ in the set $\{\lvert T(x)-x\rvert>0\}$.
\end{abstract}

\maketitle

\section{Introduction}
In this paper we are concerned with the optimal transportation problem
\begin{align}\label{eq:problem}
\min_{\pi\in \Pi(\rho_0,\rho_1)} \int c(x-y)\mathrm{d}\pi,
\end{align}
where $\Pi(\rho_0,\rho_1)$ denotes the set of plans in $\R^d\times \R^d$ with marginals $\rho_0$ and $\rho_1$. Under mild assumptions, see e.g. \cite[Theorem 2.12]{Villani}, it is known that the minimiser is unique and  of Monge-form, that is $\pi = (x,T(x))_{\# \rho_0}$ for some map $T\colon \R^d\to \R^d$. We are concerned with obtaining a partial regularity result for $T$ in the setting where $c$ is modeled on the cost $\lvert x-y\rvert^p$ for some $p>1$. 

In order to state our main theorem, let us make our assumptions on the cost function precise. 

\begin{assumption}\label{ass}
Let $p>1$ and $\alpha\in(0,1]$. We consider a convex cost function $c\in C^{2,\alpha}(\R^d\setminus \{0\})\cap C^1(\R^d)$ satisfying $c(0)=0$, $\nabla c(0)=0$ and for some $\Lambda>0$,
\begin{align}
\Lambda^{-1}(\lvert x\rvert+\lvert y\rvert)^{p-2}\lvert x-y\rvert^2\leq& \langle \nabla c(x)-\nabla c(y),x-y\rangle,\quad& \forall x,y\in \R^d\,\label{ass1}\\
\lvert \nabla c(x)-\nabla c(y)\rvert \leq& \Lambda (\lvert x\rvert+\lvert y\rvert)^{p-2}\lvert x-y\rvert,\quad &\forall x,y\in \R^d\label{ass2}.
\end{align}
\end{assumption}
Let 
\[
 X=\{x\colon \lvert T(x)-x\rvert>0\} \qquad \textrm{and } \qquad  Y=\{y\colon \lvert y-T^{-1}(y)\rvert>0\}.
\]
Our main result then reads as follows:
\begin{theorem}\label{thm:main}
Let $p>1$. Suppose $c$ satisfies Assumption \eqref{ass} and let $\rho_0,\rho_1\in C^\alpha(\R^d)$. Suppose $\pi=(x,T(x))_{\#\rho_0}$ solves \eqref{eq:problem}. Then, there exist open sets $X^\prime$ and $Y^\prime$ with $|X\backslash X'|=|Y\backslash Y'|=0$ and  that $T$ is a $C^{1,\alpha}$-diffeomorphism between the sets $X'$ and $Y'$.
\end{theorem}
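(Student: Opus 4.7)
The strategy I would follow is a Campanato-type iteration at each point $x_0 \in X$, adapting the harmonic-approximation scheme developed for the quadratic cost by Goldman--Otto (and subsequently by De~Philippis--Figalli, Otto--Prod'homme--Ried, and others for more general costs) to the present $p$-cost setting. The key observation is that on $\R^d\setminus\{0\}$ the cost $c$ is $C^{2,\alpha}$ and, by \eqref{ass1}--\eqref{ass2}, uniformly elliptic on every compact subset bounded away from the origin. Consequently, once we know that $|T(x)-x|$ is uniformly bounded away from $0$ on most of a small ball around $x_0$, the problem locally reduces to optimal transport with a smooth, strongly convex cost, where a perturbative linearisation is available.

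First I would establish a local non-degeneracy statement: for $x_0 \in X$ there exist $r_0,c_0>0$ and a set $G\subset B_{r_0}(x_0)$ of nearly full $\rho_0$-measure on which $|T(x)-x|\geq c_0$. This follows from the $c$-cyclical monotonicity of $T$ and an $L^2$-stability of the cost, and it allows us to replace $c$, in the relevant range of displacements, by a $C^{2,\alpha}$ uniformly convex cost. Next I would introduce the $L^2$-excess
\[
E(x_0,r) \;=\; \inf_{b\in\R^d}\, \frac{1}{r^{d+2}}\int_{B_r(x_0)}|T(x)-x-b|^2\,\rho_0(x)\,\dd x \;+\; (\text{analogous term for }T^{-1}),
\]
and prove a one-step decay
\[
E(x_0,\theta r)\;\leq\;\theta^{2\alpha}E(x_0,r)\,+\,C\,r^{2\alpha}
\]
for a fixed $\theta\in(0,1)$ and small $r\leq r_0$. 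The core content is a quantitative harmonic approximation: letting $A:=D^2c(T(x_0)-x_0)$, I would compare $T$ with the map $x\mapsto x+b+\nabla c^*(A\nabla u)$, where $u$ solves the constant-coefficient linear problem $\mathrm{div}(A\nabla u)=f$ with $f$ built from $\rho_1/\rho_0$; standard Schauder estimates give $u\in C^{2,\alpha}$, hence $\theta^{1+\alpha}$-closeness to affine at scale $\theta r$. The comparison between $T$ and this linearised competitor is obtained via a Dacorogna--Moser construction to produce an admissible transport plan, combined with the near-optimality inequality coming from the minimality of $\pi$. Iterating the decay and invoking Campanato's characterisation yields a pointwise $C^{1,\alpha}$-regularity of $T$ at $x_0$ with uniformly invertible differential (ellipticity of $A$). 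Since the smallness of excess and the bound $|T(x_0)-x_0|\geq c_0$ are open conditions that propagate under the iteration, the set $X'$ of such good points is open with $|X\setminus X'|=0$, and the symmetric argument applied to $T^{-1}$ gives the corresponding $Y'$ and the diffeomorphism conclusion.

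The main obstacle I expect is the harmonic-approximation step. It requires (i) a Dacorogna--Moser type construction of competitors compatible with the $p$-structure of $c$ and (ii) a careful quantification of the Taylor remainder of $\nabla c$ around $T(x_0)-x_0$, using only the $C^{2,\alpha}$-regularity of $c$ away from the origin. Controlling this remainder uniformly as the scale $r$ shrinks, while simultaneously propagating the lower bound $|T(x)-x|\geq c_0$ under the iteration, is the delicate part; the singularity of $\nabla c$ at $0$ is precisely what forces the partial-regularity conclusion to be restricted to $X$.
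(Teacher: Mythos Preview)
Your high-level strategy---localise away from the singularity of $c$, then invoke the regularity theory for smooth uniformly convex costs---is exactly the paper's strategy. However, there is a genuine gap at the step you treat most lightly, and the place you flag as the main obstacle is in fact not one.

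The critical issue is your non-degeneracy statement. You claim that for $x_0\in X$ there is a set $G\subset B_{r_0}(x_0)$ of \emph{nearly full measure} on which $|T(x)-x|\ge c_0$, and that this ``follows from $c$-cyclical monotonicity and an $L^2$-stability of the cost''. This is too weak and the justification is too vague. To replace $c$ by a $C^{2,\alpha}$ cost on $B_r(x_0)\times T(B_r(x_0))$ and to run any Campanato scheme, you need the lower bound $|T(x)-x|\ge c_0$ to hold for \emph{every} $x$ in a full ball, not merely almost every $x$; otherwise the cost is still singular at uncontrolled points and neither the competitor construction nor the Taylor expansion of $\nabla c$ around $T(x_0)-x_0$ is legitimate. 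Upgrading an $L^p$-type bound on the displacement to a pointwise one is precisely the content of Theorem~\ref{thm:linfty}, which is the main technical contribution of the paper and is nontrivial for $p\neq 2$ (the quadratic affine invariance is lost, and for $p<2$ the potentials are not semi-convex so one cannot simply appeal to Alexandrov). The paper proves it by a careful multi-point use of cyclical monotonicity in the spirit of Bouchitt\'e--Jimenez--Rajesh; your one-line appeal to monotonicity does not capture this.

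Once the pointwise bound \eqref{claimlargeu} is in hand, the paper does \emph{not} redo harmonic approximation. It shows that on the resulting open set the Kantorovich potential $\psi$ is $C(\eta)$-semi-concave (since the supremum in \eqref{KAntopsiphi} can be restricted to $|y-x|\ge\eta/2$, where $|\nabla^2 c|\lesssim\eta^{p-2}$), hence twice differentiable a.e.\ by Alexandrov; then a change of variables $y\mapsto y-b$ with $b=T(\bar x)-\bar x$ turns the problem into optimal transport for a cost $\bar c$ that is genuinely $C^{2,\alpha}$ and non-degenerate near the diagonal, and one invokes the $\varepsilon$-regularity theorem of Otto--Prod'homme--Ried as a black box. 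So your anticipated ``main obstacle''---the harmonic approximation step---is not an obstacle at all here: it is already available in the literature and is applied off the shelf. The real work, which your proposal skips, is the $L^\infty$ estimate.
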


Partial regularity theorems for \eqref{eq:problem} were obtained by \cite{Figalli2010} in the case $\lvert x-y\rvert^2$ and for general non-singular cost $c$ in \cite{DePhilippis2015} using \textsc{Caffarelli's} viscosity approach to solutions of the Monge-Amp\`{e}re equation, which is the Euler-Lagrange equation for \eqref{eq:problem}. In \cite{Goldman2018,Goldmanrendiconti} the result of \cite{Figalli2010} was obtained using a variational approach based on the geometric linearisation of the Monge-Amp\`{e}re equation, see also \cite{GHO}. This was later generalised to reprove the results of \cite{DePhilippis2015} in \cite{Prodhomme2021}. Regarding singular cost functions of $p-$type, to the best of our knowledge, the only known result is the global regularity for densities defined on convex domains which are either sufficiently far apart or  under the condition that  $p\ge2$ with $|p-2|\ll1$, see \cite{Cafp}. In this note we thus obtain partial regularity when the domains are neither convex nor disjoint. 

The main (and very natural) idea behind our proof is that away from fixed points, that is for $x\neq 0$, $c$ locally looks like a quadratic cost function. In order to localize and apply  the existing partial regularity theory for non-singular cost-functions from \cite{DePhilippis2015,Prodhomme2021} we need a localized $L^\infty$ bound on the displacement. When $p\ge2$, since the Kantorovich potentials are semi-convex, this follows from Alexandrov theorem (see \cite{DePhilippis2015,Prodhomme2021}) while the situation is more delicate for $p<2$. Our second main result is the following $L^\infty$-bound.
\begin{theorem}\label{thm:linfty}
Let $p>1$ and  $\pi$ be a minimiser for the $c$-optimal transport problem between  $\rho_0$ and $\rho_1$. Given $A\in \R^{d\times d}$, $b\in \R^d$ and $m>0$, denote
\begin{align*}
E_m= \int_{B_2} \lvert T(x)-x-b-Ax\rvert^{m}\dd\rho_0,\quad D = \inf_{B_r\Subset B_2} \frac{\rho_0(B_r)}{r^d}.
\end{align*}
Assuming $D>0$, it holds that for $x\in B_{1/2}$,
\begin{align*}
\lvert T(x)-x-b-Ax\rvert \lesssim \begin{cases} E_m^\frac 1 {m} \qquad &\text{ if } E_m\geq 1\\
E_m^\frac 1 {m+d} &\text{ if } E_m\leq 1.
\end{cases}
\end{align*}
The implicit constant depends on $D$, $p$, $m$, $d$ and $\lvert A\rvert$, but not on $b$.
\end{theorem}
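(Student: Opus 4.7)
The strategy is a spreading argument. Writing $u(x)=T(x)-x-b-Ax$, I aim to show that if $M:=|u(x_0)|$ at some $x_0\in B_{1/2}$, then $|u(x)|\geq M/2$ for $\rho_0$-a.e.\ $x$ in a ball $B_{r_0}(x_0)$ of radius $r_0=c_0\min(M,1)$, intersected with $B_2$. Given this spreading claim, the density lower bound $D>0$ yields $\rho_0(B_{r_0}(x_0)\cap B_2)\gtrsim D\min(M^d,1)$ and hence
\[
E_m\geq \int_{B_{r_0}(x_0)\cap B_2}|u|^m\,d\rho_0\gtrsim M^m\min(M^d,1),
\]
which solved for $M$ yields exactly $M\lesssim E_m^{1/m}$ when $E_m\geq 1$ and $M\lesssim E_m^{1/(m+d)}$ when $E_m\leq 1$, matching the statement.

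The spreading claim I would extract from $c$-monotonicity of $\mathrm{spt}\,\pi$. Setting $\alpha=T(x_0)-x_0$, $\beta=T(x)-x$, $h=x-x_0$, the two-point inequality $c(-\alpha)+c(-\beta)\leq c(h-\alpha)+c(-h-\beta)$ rewrites, via the fundamental theorem of calculus, as
\[
\int_0^1\bigl\langle \nabla c(-\alpha+th)-\nabla c(-\beta-th),\,h\bigr\rangle\,dt\geq 0.
\]
In the main regime $|\alpha|\gtrsim M$ and $|h|\leq r_0\ll|\alpha|$, Taylor expansion around $-\alpha$ and $-\beta$, using the $C^{2,\alpha}$-regularity of $c$ away from $0$, reduces this to $\langle \nabla c(-\alpha)-\nabla c(-\beta),h\rangle \gtrsim -|h|^2(|\alpha|+|\beta|)^{p-2}$. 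Since $D>0$, the point $x=x_0+h$ may be varied over a positive-volume subset of $\mathrm{spt}\,\rho_0\cap B_{r_0}(x_0)$, so this scalar inequality is available with $h$ spanning all directions; combined with the coercivity consequence of \eqref{ass1}, namely $|\nabla c(-\alpha)-\nabla c(-\beta)|\gtrsim(|\alpha|+|\beta|)^{p-2}|\alpha-\beta|$, this upgrades to the vector bound $|\alpha-\beta|\lesssim|h|$. Writing $u(x)-u(x_0)=(\beta-\alpha)-Ah$, one then obtains $|u(x)-u(x_0)|\lesssim(1+|A|)|h|\leq M/2$ for $c_0$ small enough (depending on $|A|$ and the implicit constants), giving $|u(x)|\geq M/2$ as required.

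The main obstacles are twofold. First, the above assumed $|\alpha|\gtrsim M$; in the complementary near-fixed-point regime $|T(x_0)-x_0|\ll M$ one necessarily has $|b+Ax_0|\approx M$, and $\nabla^2 c$ is either singular ($p<2$) or degenerate ($p>2$) at $0$, so the Taylor expansion collapses. Here a separate argument is needed, exploiting that $b+Ax$ varies slowly on $B_{r_0}(x_0)$: either $|T(x)-x|$ stays small on a sizeable portion of the ball, so that $|u|\approx|b+Ax|\approx M$ there, or mass is displaced by $\gtrsim M$ on a sizeable portion; in both sub-cases the same volume estimate yields $E_m\gtrsim M^{m+d}$. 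Second, the passage from the scalar inequality to the vector bound $|\alpha-\beta|\lesssim|h|$ is delicate because $\beta=\beta(x)$ varies with $x$; it requires combining many nearby pairs $(x_0,x)$ simultaneously and exploiting the Hölder continuity of $\nabla c$ given by Assumption~\ref{ass}. The independence of the constants from $b$ is automatic, as $b$ cancels out of the $c$-monotonicity inequality entirely.
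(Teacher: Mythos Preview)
Your spreading claim --- that $|u(x)|\geq M/2$ for $\rho_0$-a.e.\ $x$ in a ball of radius $\sim\min(M,1)$ around $x_0$ --- is false in general, and this is where the proposal breaks. Take the quadratic cost in one dimension, $\rho_0$ uniform on $[-1,1]$, $\rho_1$ uniform on $[-1,0]\cup[K,K+1]$, $A=0$, $b=0$. The optimal map is $T(x)=x$ for $x\leq 0$ and $T(x)=x+K$ for $x>0$. At $x_0=\epsilon>0$ small we have $M=|u(x_0)|=K$, yet $u$ vanishes identically on $(-c_0,0)$, so the claim fails on a set of positive $\rho_0$-measure inside every ball around $x_0$. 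The theorem's bound still holds here because $|u|=K$ on the half-interval $(0,1)$, but your argument does not see that half: it tries to infer the spreading from the two-point monotonicity between $x_0$ and a \emph{single} nearby $x$, and that simply does not work. Concretely, the step ``upgrade the scalar inequality to $|\alpha-\beta|\lesssim|h|$'' is equivalent to a local Lipschitz bound on $T$, which is strictly stronger than the $L^\infty$ bound you are trying to prove; in the example above $|\alpha-\beta|=K$ while $|h|$ can be arbitrarily small. The acknowledged obstacle that $\beta$ varies with $h$ is not a technicality but the heart of the matter.

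The paper's route is genuinely different and avoids spreading altogether. After translating to $\tilde x=0$ and writing $y=T(0)$, it does not compare $(0,y)$ with a \emph{generic} neighbour but with a single point $x'$ chosen along the specific direction $v_\e=(\mathrm{Id}+\e A)^{-1}\e(y-\tilde b)$ and picked, by a pigeonhole over $B_r(v_\e)$, so that $|u(x')|^m\leq r^{-d}E/D$. With this choice the four quantities $y$, $\tilde b+Av_\e$, $y-x'$ and $y'-x'$ line up as (near) convex combinations of one another, and the strict convexity \eqref{ass1b} turns the two-point monotonicity into a genuine lower bound $\e(|\tilde b|+|y-\tilde b|)^{p-2}|y-\tilde b|^2\lesssim I+II$, where the right-hand side is controlled by $r$ and $(r^{-d}E)^{1/m}$. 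Optimising in $r$ and $\e$ then yields the two cases of the estimate. The point is that a \emph{good direction} plus \emph{one good comparison point} replaces your spreading over a full ball; this is the Bouchitt\'e--Jimenez--Rajesh mechanism the paper cites.
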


In order to prove Theorem \ref{thm:main}, we will apply Theorem \ref{thm:linfty} with $m = p$, in which case $E_p$ is a standard excess quantity.\\
Theorem \ref{thm:linfty} is of interest in its own right and might actually be seen as the main contribution of the note. Global versions of this bound with $m=p$, $b=0$ and $A=0$ first appeared  in \cite{Bouchitte2007}. As far as local bounds are concerned, in the case $p=2$, where by affine invariance of the cost it is possible to reduce to the case $b=0$ and $A=0$, such bounds were obtained  in \cite{Goldman2018,Prodhomme2021,GHO}. These bounds  played a major role in the variational approach to regularity theory  for optimal transport maps. Still when $b=0$ and $A=0$ but for $p\neq 2$, Theorem \ref{thm:linfty} recovers earlier results of \cite{Koch2024,Gutierrez2021}. In the case $p\ge 2$ (and for homogeneous costs), Theorem \ref{thm:linfty} has been proven in \cite{Gut2024} (see also the first Arxiv version of \cite{Gutierrez2022}) with a much more complicated proof.\\


 As alluded to, when $p\neq2$, we lose the  affine invariance of the cost and Theorem \ref{thm:linfty} cannot be reduced to the case $b=0$ and $A=0$ anymore. Moreover, in this case, the approach from \cite{Goldman2018,Prodhomme2021} with the geometric interpretation from \cite{Koch2024} seems difficult to implement. Our argument is much closer in spirit  to the original proof in \cite{Bouchitte2007}. As in \cite{Bouchitte2007}, the idea is to use the $c$-monotonicity optimality condition for many points which lie on the ray $T(x)-x-b$.\\

 We anticipate that Theorem \ref{thm:linfty} might have applications for the  derivation of a full partial regularity result (also including fixed points). This motivated us to include the case $A\neq 0$ in our statement. Let us however point out that an $\e-$regularity theorem around fixed points seems to be a very challenging problem for multiple reasons. First, even though an 'harmonic-approximation' result has been obtained in this context in \cite{Koch2024}, the approach of \cite{Goldman2018} works at the level of $C^{1,\alpha}$ regularity for the map $T$ which requires $C^{2,\alpha}$ regularity for the solutions of the geometrically linearized equation (here the $p'-$Laplace equation). However such regularity is known to fail for $p> 2$ and is only known qualitatively for $d=2$ and $p<2$, see \cite{Iwaniec1989}. Second, even if we ignore this issue, due to the lack of affine invariance of the cost, after one-step of the Campanato iteration we lose the structure of having a cost of the form $c(x-y)$ which is essential for the 'harmonic-approximation' results of \cite{Goldman2018,GHO,Koch2024}. Last, in the case $p<2$, where one could hope to get the needed $C^{2,\alpha}$ regularity for the $p'-$Laplace equation, the Kantorovich potentials are not semi-convex anymore and we cannot appeal to Alexandrov theorem to prove that around most fixed point the excess is small. See however \cite{DuzMin} where a related partial regularity results has been obtained for the geometrically linearized problem. We thus leave this problem for future research.\\


The outline of our paper is as follows. In Section \ref{sec:prelim} we collect some preliminary comments from convex analysis and regarding certain quantities related to $p$-cost. In Section \ref{sec:linfty} we prove Theorem \ref{thm:linfty}, while we prove Theorem \ref{thm:main} in Section \ref{sec:mainTheorem}

\section*{Notation}
The symbols $\simeq$, $\ges$, $\les$ indicate estimates that hold up to a global constant $C$,
which only depends on the dimension $d$, the exponent $p$ and the H\"older exponent $\alpha$ (if applicable). 
For instance, $f\les g$ means that there exists such a constant with $f\le Cg$,
$f\simeq g$ means $f\les g$ and $g\les f$. An assumption of the form $f\ll1$ means that there exists $\e>0$, only
depending on $p$, $d$ and $\alpha$, such that if $f\le\e$, 
then the conclusion holds.   
\section{Preliminaries}\label{sec:prelim}
We collect some well-known facts from convex analysis and regarding certain quantities naturally related to the type of cost functions we consider.

Let $c\in C^1(\R^d)$ be a convex function. Then for any $x,y\in \R^d$,
\begin{align}\label{eq:convex1}
c(x)-c(y)\leq \langle \nabla c(x),x-y\rangle.
\end{align}

Furthermore if $c$ is strictly convex, $C^1$ and superlinear, then $\nabla c$ is a homeomorphism of $\R^d$.

\eqref{ass1} is equivalent to the assumption that for some $\Lambda>0$ and all $x,y\in \R^d$,
\begin{align}\label{ass1a}
c(x)\geq c(y)+\langle \nabla c(y),x-y\rangle+\Lambda^{-1} (\lvert x\rvert+\lvert y\rvert)^{p-2}\lvert x-y\rvert^2.
\end{align}
A further equivalent formulation is that for some $\Lambda>0$ and all $x,y\in \R^d$, $\lambda\in [0,1]$,
\begin{align}\label{ass1b}
c(\lambda x+(1-\lambda) y)\leq \lambda c(x)+(1-\lambda) c(y)-\Lambda^{-1}\lambda(1-\lambda)(\lvert x\rvert+\lvert y\rvert)^{p-2}\lvert x-y\rvert^2.
\end{align}

Note that \eqref{ass2}  with $y=0$ combined with \eqref{ass1} for $y=0$ imply $\lvert \nabla c(x)\rvert \sim \lvert x\rvert^{p-1}$. 
Then applying \eqref{ass1a} with $x=0$ or $y=0$ in combination with this fact, we obtain that
\begin{align}\label{growth}
c(x)\sim \lvert x\rvert^p.
\end{align}

Notice also that \eqref{ass1} together with \eqref{ass2} and $c\in C^2(\R^d\backslash\{0\})$ implies 
\begin{equation}\label{hypHessc}
 |\nabla^2 c(z)|\simeq |z|^{p-2}.
\end{equation}

\begin{lemma}\label{lem:monotone}
Let $p>1$ and $\beta\geq 1$. Then for any $\mu\geq 0$, the function
\begin{align*}
t\to (\mu+t)^{p-2}t^\beta
\end{align*}
is monotonic increasing in $t$. In fact, for every $C_0>0$ there exists $C_1>0$ such that  for every  $s,t,\mu \geq 0$,
\begin{align}\label{eq:monoton}
(\mu+t)^{p-2}t^\beta\le C_0(\mu+s)^{p-2}s^\beta\quad\Rightarrow \quad t\le C_1 s.
\end{align}
Finally, a Young-type inequality holds: For any $\e>0$ there is $C_\e>0$ such that for any $\mu\geq 0$, $x,y\in \R^d$,
\begin{align}\label{eq:Young}
(\mu+\lvert x\rvert)^{p-2}\lvert x\rvert \lvert y \rvert \leq \e (\mu+\lvert x\rvert)^{p-2}\lvert x\rvert^2+C_\e (\mu+\lvert y\rvert)^{p-2}\lvert y\rvert^2.
\end{align}
\end{lemma}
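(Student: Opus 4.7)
The plan is to prove the three statements in order. For (i), I would differentiate directly:
\begin{equation*}
\partial_t\bigl[(\mu+t)^{p-2}t^\beta\bigr] = (\mu+t)^{p-3}t^{\beta-1}\bigl[\beta\mu+(\beta+p-2)t\bigr],
\end{equation*}
which is non-negative because $\beta\geq 1$ and $p>1$ force $\beta+p-2>0$; hence the function is non-decreasing in $t$.

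For (ii), note first that if $s=0$, the hypothesis forces $t=0$, so I may assume $s>0$ and rescale by setting $u=t/s$, $\nu=\mu/s$. The hypothesis becomes $(\nu+u)^{p-2}u^\beta \leq C_0(\nu+1)^{p-2}$ and the conclusion reduces to $u\les 1$. The sub-case $u\leq 1$ is trivial. For $u\geq 1$ I would use one of two elementary comparisons depending on the sign of $p-2$: the bound $\nu+u\geq \nu+1$ (useful when $p\geq 2$) gives $u^\beta\leq C_0$, while $\nu+u\leq u(\nu+1)$ (useful when $p<2$, since $(\cdot)^{p-2}$ is then decreasing) gives $u^{\beta+p-2}\leq C_0$. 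In both cases $\beta+p-2>0$ yields the required bound on $u$.

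For the Young-type inequality (iii) I would split on whether $|x|\leq|y|$ or $|x|>|y|$. In the first case, part (i) applied with $\beta=1$ gives $(\mu+|x|)^{p-2}|x|\leq (\mu+|y|)^{p-2}|y|$, and multiplying by $|y|$ yields the claim (absorbing into the $C_\e$ term). In the second case, the scalar Young inequality $|x||y|\leq \e|x|^2+C_\e|y|^2$ followed by multiplication by $(\mu+|x|)^{p-2}$ delivers the first term on the right directly; the residual term $C_\e(\mu+|x|)^{p-2}|y|^2$ is dominated by $C_\e(\mu+|y|)^{p-2}|y|^2$ whenever $p\leq 2$, because $(\mu+\cdot)^{p-2}$ is then decreasing.

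The main obstacle is the superquadratic regime $p>2$ with $|x|>|y|$, where the comparison $(\mu+|x|)^{p-2}\leq (\mu+|y|)^{p-2}$ fails. In this range I would return to the original left-hand side and use the equivalence $(\mu+|x|)^{p-2}\simeq \mu^{p-2}+|x|^{p-2}$ (valid for $p\geq 2$) to decompose it as $\mu^{p-2}|x||y|+|x|^{p-1}|y|$; apply AM-GM to the first piece and Young with exponents $(p',p)$ to the second; and then identify the resulting terms using $\mu^{p-2}t^2\leq (\mu+t)^{p-2}t^2$ and $t^p\leq (\mu+t)^{p-2}t^2$, both valid for $p\geq 2$.
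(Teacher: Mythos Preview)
Your argument is correct. Parts (i) and (ii) match the paper closely: the paper also checks non-negativity of the derivative for monotonicity, and for the implication it argues by contradiction (assume $t>C_1 s$, apply the monotonicity just proven, and compare powers via $\gamma=\min(\beta,\beta+p-2)$), which is simply the contrapositive of your rescaling computation. The only genuine difference is part (iii): the paper does not prove the Young-type inequality at all but cites \cite[Lemma 2.3]{Mingione2002}, whereas you supply a self-contained case analysis. Your handling of the delicate superquadratic regime $p>2$, $|x|>|y|$ via the equivalence $(\mu+|x|)^{p-2}\simeq\mu^{p-2}+|x|^{p-2}$ followed by two separate Young inequalities (AM--GM on the $\mu^{p-2}$ piece, Young with exponents $(p',p)$ on the $|x|^{p-2}$ piece) is clean and worth keeping as a replacement for the external citation.
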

\begin{proof}
Monotonicity of $t\mapsto (\mu+t)^{p-2}t^\alpha$ follows directly from checking that the derivative of the function is non-negative. Let $\gamma=\min(\beta,p-2+\beta)>0$ and for  $C_0>0$ set  $C_1=C_0^{1/\beta}$. If $s,t\geq 0$ are such that
\begin{align*}
(\mu+t)^{p-2}t^\beta\leq C_0 (\mu+s)^{p-2}s^\beta,
\end{align*}
assume for the sake of contradiction that  $t> C_1 s$. Using the monotonicity we find 
\begin{align*}
C_0 (\mu+s)^{p-2}s^\beta&\ge (\mu+t)^{p-2}t^\beta > (\mu+C_1s)^{p-2} (C_1s)^\beta \\
&\geq C_1^{\gamma}(\mu+s)^{p-2}s^\beta=C_0 (\mu+s)^{p-2}s^\beta.
\end{align*}
This  gives the desired contradiction. \\

A proof of \eqref{eq:Young} can be found in \cite[Lemma 2.3]{Mingione2002}.
\end{proof}

\section{\texorpdfstring{$L^\infty$}{}-bounds}\label{sec:linfty}
In this section we prove Theorem \ref{thm:linfty}.

\begin{proof}[Proof of Theorem \ref{thm:linfty}]

\textbf{Step 1: Cyclical monotonicity} Let $\tilde{x}\in B_{1/2}$ and write $\tilde{y}=T(\tilde{x})$. Making the change of variables 
\[
 x=\tilde{x}+x', \quad y=\tilde{x}+y', \quad \rho_0'(x')=\rho_0(x) \quad \textrm{and } \quad \rho_1'(y')=\rho_1(y)
\]
we have by translation invariance of the cost that  $T'(x')=T(x)-\tilde{x}$ is the optimal transport map between $\rho'_0$ and $\rho'_1$. Setting $\tilde{b}=b+A\tilde{x}$ and 
\[
 \tilde{E}=\int_{B_1} |T'-x'-\tilde{b} -Ax'|^m d\rho_0',
\]
it is enough to prove that 
\begin{align}\label{eq:Linfty2}
\lvert T'(0)-\tilde b\rvert \lesssim \begin{cases} \tilde{E}^\frac 1 {m} \qquad &\text{ if } \tilde{E}\geq 1\\
\tilde{E}^\frac 1 {m+d} &\text{ if } \tilde{E}\leq 1.
\end{cases}
\end{align}
To lighten notation we write $T$ for $T'$ and $E$ for $\tilde{E}$. Moreover we denote $y=\tilde{y}-\tilde{x}$.
Given $(x^\prime,y^\prime)=(x^\prime,T(x^\prime))\in B_1\times \R^d$, by cyclical monotonicity we have
\begin{align}\label{eq:mono2}
c(y)+c(y^\prime-x^\prime)\leq c(y^\prime)+c(y-x^\prime).
\end{align}
Let us point out that this is  the only form in which we use the minimality of $T$.

\textbf{Step 2: Preliminary estimates}
Note that for 
\begin{align}\label{eq:erestr1}
\e< \frac 1 4\lvert A\rvert^{-1},
\end{align}
we may define $v_\e$ uniquely as a solution of the equation
\begin{align}\label{eq:defv}
v_\e = \e \left(y-(A v_\e+\tilde b)\right) \quad \Leftrightarrow \quad v_\e = (\textup{Id}+\e A)^{-1}\e(y-\tilde b)
\end{align}
Further note that, if \eqref{eq:erestr1} holds and we additionally ensure
\begin{align}\label{eq:erestr2}
\e \lvert  y-\tilde b\rvert\leq \frac 3 8,
\end{align}
then 
\begin{align*}
\lvert v_\e\rvert \leq \|\textup{Id}+\e A\|^{-1} |\e( y-\tilde b)|\leq \frac 1 2.
\end{align*}

We will precise our choice of $\e\in(0,1/2)$ at a later stage, but for now assume that \eqref{eq:erestr1} and \eqref{eq:erestr2} are satisfied. Then for $r\in (0,\lvert v_\e\rvert/2)$ to be fixed at a later stage, we may find $(x^\prime,y^\prime)=(x^\prime,T(x^\prime))\in B_1\times \R^d$ such that
\begin{align}\label{eq:xprime}
\lvert x^\prime- v_\e\rvert \leq r,\qquad \lvert y^\prime-x^\prime- b^\prime\rvert^m\leq r^{-d}\frac{E}{D}.
\end{align}
where 
\begin{equation}\label{defb'}
b^\prime = \tilde{b}+ A x^\prime.
\end{equation}
Indeed, suppose that  for all $(x^\prime,T(x^\prime))\in B_r(v_\e)\times \R^d\Subset B_2\times \R^d$ and some $M>D^{-1}$,
\begin{align*}
\lvert y^\prime-x^\prime-b^\prime\rvert^m\geq \frac{M E}{r^d}.
\end{align*}
Then
\begin{align*}
E\geq \frac{M E}{r^d}\int_{B_r(v_\e)}\mathrm d\rho_0\geq M E D,
\end{align*}
giving a contradiction.

Note that since $r\in (0, |v_\e|/2)$, definition \eqref{eq:defv} of $v_\e$ and \eqref{eq:erestr1}, and definition \eqref{eq:xprime} of $x^\prime$,
\begin{equation}\label{simeqvepsyb}
\lvert x^\prime\rvert+ |v_\e|\simeq|v_\e|\simeq \e|y-\tilde b|.
\end{equation}
Hence it also holds that
\begin{equation}\label{simeqybDelta}
 |y- \tilde b -Av_\e|= \e^{-1}\lvert v_\e\rvert\simeq |y-\tilde b|.
\end{equation}

Finally, using \eqref{simeqvepsyb} and the triangle inequality, we find
\begin{align}\label{error3}
 \lvert \tilde b + A v_\e+ x^\prime\rvert+\lvert  y- x^\prime\rvert\simeq |\tilde b|+| y|.
\end{align}

\textbf{Step 3: Applying cyclical monotonicity}

We now apply \eqref{eq:mono2} with the choice of $( x^\prime, y^\prime)$ given in \eqref{eq:xprime} and re-write \eqref{eq:mono2} as
\begin{align}\label{eq:mono3}
&c(y)+c(\tilde b+A v_\e)-c(\e y + (1-\e)(\tilde b + A v_\e))-c(\e (\tilde b + A v_\e)+(1-\e) y)\nonumber\\
\leq& \lt[\left(c(\tilde b+A v_\e+x^\prime)-c(\tilde b +A v_\e+ v_\e)\right)+\left(c(y-x^\prime)-c(y-v_\e)\right)\rt]\nonumber\\
&\quad+\lt[\left(c(\tilde b + A v_\e)-c(\tilde b + A v_\e+x^\prime)\right)+\left(c(y^\prime)-c(y^\prime-x^\prime)\right)\rt]\nonumber\\
=& I + II.
\end{align}

Using convexity in the form of \eqref{eq:convex1} followed by \eqref{ass2}, we find
\begin{align*}
I \lesssim& \langle \nabla c(\tilde b + A v_\e+x^\prime)-\nabla c(y-x^\prime),x^\prime-v_\e\rangle\nonumber\\
\lesssim& (\lvert \tilde b + A v_\e+x^\prime\rvert+\lvert y-x^\prime\rvert)^{p-2}\lvert \tilde b + A v_\e-y+2x^\prime\rvert \lvert  x^\prime-v_\e\rvert\nonumber\\
\stackrel{\eqref{eq:xprime}}{\leq}& (\lvert \tilde b + A v_\e+ x^\prime\rvert+\lvert  y- x^\prime\rvert)^{p-2}\lvert \tilde b + A v_\e-y+2 x^\prime\rvert r.
\end{align*}

Using \eqref{simeqvepsyb} and \eqref{error3}, we may further simplify the estimate above to
\begin{align}\label{eq:upper2}
I\lesssim& (\lvert \tilde b\rvert+\lvert y\rvert)^{p-2}\lvert \tilde b- y\rvert r.
\end{align}

Similarly,
\begin{align}\label{eq:upper1}
II\lesssim& \langle \nabla c(y^\prime)-\nabla c(\tilde b + A v_\e), x^\prime\rangle\nonumber\\
\lesssim& (\lvert y^\prime\rvert+\lvert \tilde b+Av_\e\rvert)^{p-2}\lvert y^\prime-\tilde b-Av_\e\rvert \lvert x^\prime\rvert\nonumber\\
\lesssim& (\lvert y^\prime-\tilde b-Av_\e\rvert+\lvert\tilde b+Av_\e\rvert)^{p-2}\lvert y^\prime-\tilde b-Av_\e\rvert \lvert x^\prime\rvert= II'.
\end{align}
On the one hand, if $\lvert y^\prime-\tilde b-Av_\e\rvert \lesssim \e\lvert y-\tilde b-Av_\e\rvert$, then we deduce using  the monotonicity from Lemma \ref{lem:monotone}, as well as \eqref{simeqvepsyb} and \eqref{simeqybDelta},
\begin{align*}
II'\lesssim& \e^{\min(2,p)} (\lvert y-\tilde b-Av_\e\rvert+\lvert \tilde b\rvert)^{p-2}\lvert y-\tilde b-Av_\e\rvert \lvert y-\tilde b\rvert\\
\lesssim& \e^{\min(2,p)} (\lvert y-\tilde b-Av_\e\rvert+\lvert \tilde b\rvert)^{p-2}\lvert y-\tilde b\rvert^2\\
\stackrel{\eqref{eq:xprime}}{\lesssim}& \e^{\min(2,p)} (\lvert y-\tilde b\rvert+\lvert \tilde b\rvert)^{p-2}\lvert y-\tilde b\rvert^2.
\end{align*}
On the other hand, if $\lvert y^\prime-\tilde b-Av_\e\rvert \gg \e\lvert y-\tilde b-Av_\e\rvert$, using \eqref{simeqvepsyb} and \eqref{simeqybDelta},
\begin{align}\label{eq:estim1}
\lvert x^\prime\rvert+\lvert A v_\e\rvert \lesssim \e\lvert y-\tilde b\rvert\lesssim \e\lvert y-\tilde b-Av_\e\rvert\ll \lvert  y^\prime-\tilde b-Av_\e\rvert.
\end{align}
Recalling the definition \eqref{defb'} of $b'$  we have by triangle inequality,
\begin{align*}
\lvert y^\prime-\tilde b-A v_\e\rvert\leq& \lvert y^\prime-b^\prime- x^\prime\rvert+\lvert A v_\e\rvert + \lvert x^\prime\rvert+\lvert A x^\prime\rvert.
\end{align*}
Using \eqref{eq:estim1} to absorb the last three terms on the right-hand side, we have in this case,
\begin{align}\label{otherhand}
\lvert y^\prime-\tilde b-A v_\e\rvert\lesssim \lvert y^\prime-b^\prime- x^\prime\rvert.
\end{align}
Using the monotonicity of Lemma \ref{lem:monotone}, this allows to estimate in this case 
\begin{align*}
II' \lesssim& (\lvert y^\prime-b^\prime- x^\prime\rvert+\lvert \tilde b+Av_\e\rvert)^{p-2}\lvert y^\prime- b^\prime- x^\prime\rvert \lvert x^\prime\rvert.
\end{align*}
Using now  \eqref{eq:estim1} and \eqref{otherhand} we find 
\[
 |y'-b'-x'|+\lvert \tilde b+Av_\e\rvert\simeq |y'-b'-x'|+|\tilde{b}|
\]
so that we can post-process it to 
\[
 II' \lesssim (\lvert y^\prime-b^\prime- x^\prime\rvert+\lvert \tilde b\rvert)^{p-2}\lvert y^\prime- b^\prime- x^\prime\rvert \lvert x^\prime\rvert.
\]
Combining both cases we conclude that 
\begin{equation}\label{II}
 II\les \e^{\min(2,p)} (\lvert y-\tilde b\rvert+\lvert \tilde b\rvert)^{p-2}\lvert y-\tilde b\rvert^2+(\lvert y^\prime-b^\prime- x^\prime\rvert+\lvert \tilde b\rvert)^{p-2}\lvert y^\prime- b^\prime- x^\prime\rvert \lvert x^\prime\rvert.
\end{equation}
Putting \eqref{eq:upper2} and \eqref{II} together yields
\begin{multline}\label{I+II}
 I+II\les \e^{\min(2,p)} (\lvert y-\tilde b\rvert+\lvert \tilde b\rvert)^{p-2}\lvert y-\tilde b\rvert^2\\
 +(\lvert \tilde b\rvert+\lvert y\rvert)^{p-2}\lvert \tilde b- y\rvert r+(\lvert y^\prime-b^\prime- x^\prime\rvert+\lvert \tilde b\rvert)^{p-2}\lvert y^\prime- b^\prime- x^\prime\rvert \lvert x^\prime\rvert.
\end{multline}

We now estimate  $I+II$ from below. As $\e\in(0,1)$, \eqref{ass1b} applied to the third and fourth term on the left-hand side in \eqref{eq:mono3} allows us to obtain the following lower bound (up to a constant depending on $p$ only):
\begin{align*}
\e(1-\e) (\lvert \tilde b+A v_\e\rvert+\lvert y\rvert)^{p-2}\lvert y-\tilde b-Av_\e\rvert^2\les I+II.
\end{align*}
Choosing $\e \ll 1$ and employing \eqref{simeqybDelta}, we may replace this by
\begin{align}\label{lowerI+II}
\e(\lvert \tilde b\rvert+\lvert y-\tilde b\rvert)^{p-2}\lvert y-\tilde b\rvert^2\les I+II.
\end{align}

Collecting estimates \eqref{I+II} and \eqref{lowerI+II} together and choosing $\e \ll 1$ to absorb the first right-hand side term in \eqref{I+II}, we have shown
\begin{align*}
\e (\lvert \tilde b\rvert+\lvert y-\tilde b\rvert)^{p-2} \lvert y-\tilde b\rvert^2\lesssim& (\lvert \tilde b\rvert+\lvert y\rvert)^{p-2}\lvert \tilde b- y\rvert r\nonumber\\
&+(\lvert y^\prime-b^\prime-x^\prime\rvert+\lvert \tilde b\rvert)^{p-2}\lvert y^\prime-b^\prime-x^\prime\rvert \lvert x^\prime\rvert
\end{align*}
where the implicit constant depends on $p$, $d$ and $\lvert A\rvert$. By \eqref{eq:xprime} and  the monotonicity from Lemma \ref{lem:monotone}, this implies
\begin{align}\label{eq:main2}
\e (\lvert \tilde b\rvert+\lvert y-\tilde b\rvert)^{p-2}\lvert y-\tilde b\rvert^2\lesssim& (\lvert \tilde b\rvert+\lvert y-\tilde b\rvert)^{p-2}\lvert\tilde b-y\rvert r\\
&\quad+ ((r^{-d}E)^\frac 1 m+\lvert \tilde b\rvert)^{p-2}(r^{-d} E)^\frac 1 m \lvert x^\prime\rvert \nonumber.
\end{align}

\textbf{Step 4: Proof of \eqref{eq:Linfty2} for $E\geq 1$:} As $E\geq 1$ it suffices to show 
\begin{align}\label{eq:conclude2}
\lvert y-\tilde b\rvert \lesssim 1+E^\frac 1 m.
\end{align}
Thus, we may assume without loss of generality that $\lvert y-\tilde b\rvert \geq 1$.
We choose for $0<\delta \ll 1$ to be determined $\e=\frac \delta {\lvert y-\tilde b\rvert}$. If $\delta$ is sufficiently small, then \eqref{eq:erestr1} and \eqref{eq:erestr2} are satisfied and $\e \ll 1$.
Choosing $r\ll \delta\simeq\lvert v_\e\rvert$, but $r\sim \delta$, using that $|x'|\le 1$, \eqref{eq:main2} simplifies to
\begin{align}\label{eq:conclude1}
(\lvert \tilde b\rvert+\lvert y-\tilde b\rvert)^{p-2}\lvert y-\tilde b\rvert\lesssim&(E^\frac 1 {m}+\lvert \tilde b\rvert)^{p-2}E^\frac 1 m.
\end{align}
Using \eqref{eq:monoton} with $\beta=1$, we deduce \eqref{eq:conclude2}.

\textbf{Step 5: Proof of \eqref{eq:Linfty2} for $E\leq 1$:}
Note that in order to show \eqref{eq:conclude2} we did not use that $E\geq 1$. Thus, also if $E\leq 1$, \eqref{eq:conclude2} holds and we may assume that $\lvert y-\tilde b\rvert \lesssim 1$. In particular, it suffices to ensure $\e\ll 1$ in order for \eqref{eq:erestr1} and \eqref{eq:erestr2} to hold. 
We begin by using \eqref{eq:Young} to estimate the right-hand side of \eqref{eq:main2} (up to constant) by
\begin{multline}\label{eq:inter1}
\e^2(\lvert y-\tilde b\rvert+\lvert \tilde b\rvert)^{p-2}\lvert y-\tilde b\rvert^2+C_\e(r+\lvert \tilde b\rvert)^{p-2} r^2
+(\lvert x^\prime\rvert+\lvert \tilde b\rvert)^{p-2}\lvert x^\prime\rvert^2
+((r^{-d} E)^\frac 1 m+\lvert \tilde b\rvert)^{p-2}(r^{-d}E)^\frac 2 m\\
\lesssim \e^2(\lvert y-\tilde b\rvert+\lvert \tilde b\rvert)^{p-2}\lvert y-\tilde b\rvert^2+C_\e(r+\lvert \tilde b\rvert)^{p-2} r^2+((r^{-d}E)^\frac 1 m+\lvert \tilde b\rvert)^{p-2}(r^{-d}E)^\frac 2 m.
\end{multline}
In order to obtain the second line, we used \eqref{simeqvepsyb} together with the monotonicity from Lemma \ref{lem:monotone}. Note that for $\e \ll 1$, the first term may be absorbed on the left-hand side of \eqref{eq:main2}. Choosing $r=E^\frac 1 {m+d}$ we have shown
\begin{align*}
&(\lvert y-\tilde b\rvert+\lvert \tilde b\rvert)^{p-2}\lvert y-\tilde b\rvert^2\lesssim (E^\frac 1 {m+d}+\lvert \tilde b\rvert)^{p-2}E^\frac 2 {m+d}.
\end{align*}
Now \eqref{eq:monoton} implies the desired inequality and concludes the proof.
\end{proof}

\section{Main argument}\label{sec:mainTheorem}
This section is devoted to proving Theorem \ref{thm:main}. 

\begin{proof}[Proof of Theorem \ref{thm:main}]
Let $T$ be a minimizer for the $c$-optimal transport problem between  $\rho_0$ and $\rho_1$. Let then $u(x)=T(x)-x$, $v(y)=y-T^{-1}(y)$. Recall that we have  set 
\[
 X=\{x \ : \ |u(x)|>0\} \qquad Y=\{ y \ : \ |v(y)|>0\}.
\]
We need to show that there exists $X'$, and $Y'$ open sets such that $|X'\backslash X|=0=|Y'\backslash Y|$ and such that $T$ is a $C^{1,\alpha}$ diffeomorphism between $X'$ and $Y'$. Let $\eta>0$ and set 
\begin{multline*}
 X^\eta=\{x \ : \ |u(x)|>\eta \textrm{ and } x \textrm{ is a  Lebesgue point of } u\} \\
 Y^\eta=\{ y \ : \ |v(y)|>\eta \textrm{ and } y \textrm{ is a  Lebesgue point of } v\}.
\end{multline*}
Since up to a set of Lebesgue measure $0$, $X=\cup_{\eta>0} X^\eta$ and $Y=\cup_{\eta>0} Y^\eta$ it is enough to prove the statement with $X^\eta$ and $Y^\eta$ instead of $X$ and $Y$. Let $(\phi,\psi)$ be optimal Kantorovich potentials so that using the same convention as in \cite{DePhilippis2015,Prodhomme2021} 
\begin{equation}\label{KAntopsiphi}
 \psi(x)=\sup_y -\phi(y)-c(y-x).
\end{equation}
We claim that $\psi$ is twice differentiable a.e. in $X^\eta$ and that $\phi$ is twice differentiable a.e. in $Y^\eta$.  Notice that for $p\ge 2$,   hypothesis \eqref{hypHessc} implies that $\phi$ and $\psi$ are semi-convex so that the claim follows by Alexandrov Theorem. We may thus focus on $p\le 2$ (although the argument works also for $p\ge 2$). Since the argument for $\phi$ and $\psi$ are the same we only prove the claim for $\psi$.
We claim that for every $\bar x\in X^\eta$, there exists $R(\bar x,\eta)>0$ such that 
\begin{equation}\label{claimlargeu}
 |u(x)| \ge \eta/2 \qquad \textrm{for all } x\in B_{R(\bar x,\eta)}(\bar x).
\end{equation}
By translation we assume that $\bar x=0$ and set $b=u(0)$. Notice that by definition of $X^\eta$ we have $|b|>\eta$. Let 
\[
 E=\frac{1}{R^{p+d}}\int_{B_{2R}}|u(x)-b|^p.
\]
Note that we may assume without loss of generality that $\rho_0(0)>0$ and hence if $R$ is sufficiently small, $\inf_{B_r\Subset B_{R}}\frac{\rho_0(B_r)}{r^d}>0$ as $\rho_0\in C^0$. By Theorem \ref{thm:linfty} and scaling we then have the $L^\infty$ bound,
\[
 \sup_{B_{R/4}}|u(x)-b|\les  R \lt(E^{\frac{1}{p+d}}+E^{\frac{1}{p}}\rt).
\]
Thus if $E\le 1$ (so that $E^{\frac{1}{p}}\le E^{\frac{1}{p+d}}$) and $R$ is small enough,
\[
 \sup_{B_{R/4}}|u(x)-b|\les  R\le \eta/2
\]
whereas if $E\ge 1$ (so that $ E^{\frac{1}{p+d}}\le E^{\frac{1}{p}}$) and $R$ is small enough,
\[
 \sup_{B_{R/4}}|u(x)-b|\les  R E^{\frac{1}{p}} =\lt(\frac{1}{R^d}\int_{B_R}|u-b|^p\rt)^{1/p}\le \eta/2
\]
by definition of Lebesgue points. Using triangle inequality this concludes the proof of \eqref{claimlargeu}.\\
Let 
\[
 \tilde{X}^\eta=\cup_{\bar x\in X^\eta} B_{R(\bar x,\eta)}(\bar x)
\]
so that $\tilde{X}^\eta$ is an open set with $X^\eta\subset \tilde{X}^\eta$. Recalling that $(\phi,\psi)$ are optimal Kantorovich potentials we 
notice that for every $x$ the supremum in \eqref{KAntopsiphi} is attained at $y=T(x)$. In particular if $x\in \tilde{X}^\eta$ we may restrict the supremum to $y$ such that $|y-x|\ge \eta/2$. We finally claim that in $\tilde{X}^\eta$, $\psi$ is $C(\eta)-$semi-concave. Indeed, let $\hat{x}\in \tilde{X}^\eta$. By definition of $\tilde{X}^\eta$, there exists $\bar x\in X^\eta$ such that $\hat{x}\in B_{R(\bar x,\eta)}(\bar x)$. Let  $r\ll \eta$ be such that $B_r(\hat{x})\subset B_{R(\bar x,\eta)}(\bar x)$. Then for every $x\in B_r(\hat{x})$, if $y$ is such that $|y-\hat{x}|\le \eta/4$, we have  $|y-x|\le r+\eta/4< \eta/2$ and thus since $x\in B_{R(\bar x,\eta)}(\bar x)$, $|u(x)|\ge \eta/2 >|y-x|$. Therefore,
\[
 \psi(x)=\sup_{y\in B_{\eta/4}(\hat{x})^c} -\phi(y)-c(y-x).
\]
Now for every $(x,y)\in B_r(\hat{x})\times B_{\eta/4}(\hat{x})^c$, the function $f_y(x)=-\phi(y)-c(y-x)$ satisfies by \eqref{hypHessc}, 
\[
 |D^2 f_y(x)|\les |x-y|^{p-2}\les \eta^{p-2}
\]
and is thus semi-convex with a semi-convexity constant of the order of $\eta^{p-2}$. This concludes the proof of the claim.\\

By Alexandrov Theorem, $\psi$ is twice differentiable a.e. in $\tilde{X}^\eta$ and thus also in $X^\eta$ as claimed.\\

We may now closely follow the argument from \cite{DePhilippis2015,Prodhomme2021}. Let $X_1^\eta\subset X^\eta$ be the set of points $\bar x$ such that $\psi$ is twice differentiable at $\bar x$ and define similarly $Y^\eta_1$. We let then 
\[
 X^{'}_\eta=X_1^\eta\cap T^{-1}(Y_1^\eta) \qquad \textrm{and } \qquad Y'_\eta=Y_1^\eta\cap T(X_1^\eta).
\]
Let $(\bar x,\bar y)\in X'_\eta\times Y'_\eta$ be such that $T(\bar x)=\bar y=:b$. We claim that for $r\ll1$, $T\in C^{1,\alpha}(B_r(\bar x))$. By translation invariance of $c$ we may assume that $\bar x=0$. Since by hypothesis $0\in X'_\eta\subset X^\eta$ we have $|b|>\eta$. Writing $y=b+y'$ (and thus $T(x)=b+T'(x)$), $c_b(x,y')=c(y'+b-x)$, $\rho_{1,b}(y')=\rho_{1}(y)$ and $\psi_b(x)=\psi(x)$ we see that $\psi_b$ is a $c_b-$convex function and $T'$ is the optimal transport map between $\rho_0$ and $\rho_{1,b}$ with $T'(0)=0$. We then set 
\[
 \bar{\psi}(x)=\psi_b(x)-\psi_b(0)+c_b(x,0)-c_b(0,0)=\psi(x)-\psi(0)+c(b-x)-c(b)
 \]
 and 
 \begin{multline*}
  \bar{c}(x,y)=c_b(x,y)-c_b(x,0)-c_b(0,y)+c_b(0,0)\\
 =c(y+b-x)-c(b-x)-c(y+b)+c(b).
 \end{multline*}
 Since $\bar \psi$ is a $\bar c-$convex function we see that $T'$ is the $\bar c$-optimal transport map from $\rho_0$ to $\rho_{1,b}$. Moreover, since $\psi$ and $c(b-\cdot)$ are twice differentiable at $0$,  $\bar{\psi}$ is also twice differentiable at $0$ so that for some symmetric matrix $A$, 
 \[
  \nabla \bar \psi(x)=\nabla \bar \psi(0)+ Ax + o(|x|).
 \]
Since 
\[
 \nabla \bar \psi(0)=-\nabla_x \bar c(0,0)=0,
\]
this reduces further to 
\[
 \nabla \bar \psi(x)= Ax + o(|x|).
\]
Letting $M=-\nabla_{xy} \bar c(0,0)=\nabla^2 c(b)$ (which is non-degenerate by \eqref{hypHessc} and $|b|>\eta$) we have 
\begin{equation}\label{TaylorT'}
 T'(x)=M^{-1}Ax + o(|x|).
\end{equation}
In particular, if $r$ is small enough then for $x\in B_r$, $|T'(x)|\ll \eta\le |b|$. Arguing similarly we have that for $y\in B_r$ also $|(T')^{-1}(y)|\ll \eta\le |b|$. Let $r_0$ be a small radius such that these two properties hold and let 
\[\bar \rho_0=\rho_0 \chi_{B_{r_0}} \qquad \textrm{and } \qquad  \bar \rho_{1,b}=T'\# \bar \rho_0.\]
By the previous considerations, if $r\ll r_0$ then 
\begin{equation}\label{barequalnobar}
 \bar \rho_{1,b}=\rho_{1,b} \qquad \textrm{in } B_r.
\end{equation}
Moreover, by \eqref{hypHessc},
\begin{equation}\label{mixderc}
 -\nabla_{xy} \bar c(x,y)=\nabla^{2} c(y+b-x)\sim_b {\rm Id} \qquad \textrm{in } B_{r_0}\times B_{r_0}.
\end{equation}
We now make the change of variables (arguing as in \cite[Proof of Corollary 1.4]{Prodhomme2021} we have that $A$ is positive definite) $\tilde{x}=A^{\frac{1}{2}}x$ and $\tilde{y}= A^{-\frac{1}{2}}My$ and thus
\[
 \tilde{T}(\tilde x)= A^{-\frac{1}{2}} M T'(A^{-\frac{1}{2}}\tilde{x}), \qquad \tilde{c}(\tilde{x},\tilde{y})= \bar c(A^{-\frac{1}{2}} \tilde x, M^{-1} A^{\frac{1}{2}}\tilde{y}).
\]
Setting 
\[
 \tilde{\rho}_0(\tilde{x})=\frac{\rho_0(A^{-\frac{1}{2}} \tilde{x})}{\rho_0(0)} \qquad \textrm{and } \qquad \tilde{\rho}_1(\tilde{y})=\frac{\bar \rho_{1,b}(M^{-1} A^{\frac{1}{2}} \tilde{y})}{\bar \rho_{1,b}(0)}
\]
we may check as in \cite[Proof of Corollary 1.4]{Prodhomme2021} that $\tilde{T}$ is the optimal transport map between $\tilde{\rho}_0$ and $\tilde{\rho}_1$ for the cost $\tilde{c}$. By \eqref{barequalnobar} we have for $r\ll r_0$ that $\tilde{\rho}_i\in C^\alpha(B_r)$. Moreover, by definition, $\tilde{\rho}_0(0)=\tilde{\rho}_1(0)=1$. With this change of variables \eqref{TaylorT'} becomes 
\[
 \tilde{T}(\tilde{x})=\tilde{x}+o(|\tilde{x}|)
\]
so that 
\[
\lim_{R\to 0} \frac{1}{R^{2+d}}\int_{B_R} |\tilde{T}-\tilde{x}|^2 \tilde{\rho}_0=0.
\]
Finally, by  \eqref{mixderc}, if $r\ll r_0$,
\begin{itemize}
 \item $\tilde{c}\in C^{2,\alpha}(B_r\times B_r)$;
 \item  for $x\in B_r$, the map $y\mapsto \nabla_x \tilde{c}(x,y)$ is one-to-one from $B_r$ into $\R^d$;
 \item for $y\in B_r$, the map $x\mapsto \nabla_y \tilde{c}(x,y)$ is one-to-one from $B_r$ into $\R^d$;
 \item $\det \nabla_{xy} \tilde{c} \neq 0$ for $(x,y)\in B_r\times B_r$.
\end{itemize}
We may thus apply 
 \cite[Theorem 1.1]{Prodhomme2021} and conclude that $T'\in C^{1,\alpha}(B_{R})$ for $R\ll r$ small enough. Returning back to the original variables we find that $T\in C^{1,\alpha}(B_R)$ for a (non relabelled) $R$ small enough. Moreover, if $R$ is sufficiently small, $B_R\subset X'_\eta$. Arguing similarly for $T^{-1}$ we conclude that $T$ is a diffeomorphism between $B_{R}$ and $T(B_R)$. In particular $B_R\times T(B_R) \subset X'_\eta\times Y'_\eta$ which is open and it follows that $T$ is a global $C^{1,\alpha}$ diffeomorphism between $X'_\eta$ and $Y'_\eta$.
 \end{proof}
\section*{Acknowledgments}
We warmly thank F. Otto for many useful discussions on this topic.

\end{document}